\newtheorem{remark}{Remark}
\newtheorem{theorem}{Theorem}
\newtheorem{assumption}{Assumption}
\newtheorem{proposition}{Proposition}
\newcommand{\vx}{\bm{x}}
\newcommand{\vy}{\bm{y}}
\newcommand{\vz}{\bm{z}}
\newcommand{\ve}{\bm{e}}
\newcommand{\vv}{\bm{v}}
\newcommand{\vu}{\bm{u}}
\newcommand{\vd}{\bm{d}}
\newcommand{\vp}{\bm{p}}
\newcommand{\vg}{\bm{g}}
\newcommand{\veta}{\bm{\eta}}
\newcommand{\sA}{\mathcal{A}}
\newcommand{\sR}{\mathcal{R}}
\newcommand{\sB}{\mathcal{B}}
\newcommand{\sW}{\mathcal{W}}
\newcommand{\sK}{\mathcal{K}}
\newcommand{\mM}{\mathsf{M}}
\newcommand{\mI}{\mathsf{I}}
\newcommand{\tr}{\mathsf{T}}
\newcommand{\diag}{\mathsf{diag}}
\newcommand{\KM}{\Gamma_{\text{KM}}}
\newcommand{\real}{\mathbb{R}}
\newcommand{\complex}{\mathbb{C}}
\newcommand{\bigO}{\mathcal{O}}
\newcommand{\CE}{\Psi}
\newcommand{\C}{\psi}
\renewcommand{\Re}{\text{Re}}
\renewcommand{\Im}{\text{Im}}
\newcommand{\ii}{\imath}
\DeclareMathOperator\nullspace{null}
\DeclareMathOperator\range{range}
\DeclareMathOperator\supp{supp}
\DeclareMathOperator\cond{cond}
\newcommand{\wh}{\widehat}
\newcommand{\ov}{\overline}
\newcommand{\wt}{\widetilde}
\title{Kirchhoff migration without phases}
\author{Patrick Bardsley}
\author{Fernando Guevara Vasquez}
\address{Mathematics Department, University of Utah, 155 S 1400 E RM
233, Salt Lake City UT 84112-0090.}
\email{bardsley@math.utah.edu, fguevara@math.utah.edu}
\begin{document}
\maketitle

\begin{abstract}
We present a simple, frequency domain, preprocessing step to Kirchhoff
migration that allows the method to image scatterers when the wave field
phase information is lost at the receivers, and only intensities are
measured.  The resulting imaging method does not require knowing the
phases of the probing field or manipulating the phase of the wave field
at the receivers. In a regime where the scattered field is small
compared to the probing field, the problem of recovering the
full-waveform scattered field from intensity data can be formulated as
an embarrassingly simple least-squares problem.  Although this only
recovers the projection (on a known subspace) of the full-waveform
scattered field, we show that, for high frequencies, this projection
gives Kirchhoff images asymptotically identical to the images obtained
with full waveform data. Our method can also be used when the source is
modulated by a Gaussian process and autocorrelations are measured at an
array of receivers.  \end{abstract}

\noindent
\subjclass{AMS classification numbers:  35R30, 78A46}\\
\keywords{Keywords: intensity-only imaging, correlation-based imaging,
migration}

\section{Introduction}\label{sec:intro}
Imaging scatterers in a homogeneous medium from full waveform
data is well understood. The medium is probed with waves emanating
from one or more sources and the reflections from scatterers in the
medium are recorded at one or more receivers. An image of the scatterers
can be formed from these recordings by using classic imaging methods
such as Kirchhoff or travel time migration (see e.g.\@
\cite{Bleistein:2001:MMI}); or MUSIC (see e.g.\@ \cite{Cheney:2001:LSM}).
Both Kirchhoff migration and MUSIC rely on full-waveform measurements at
the receivers to form an image. Here we work in the frequency domain and
we assume that phase information is lost and only \emph{intensities} can
be measured at the receivers. To be more precise, if
$\wh{u}_r(\omega)$ is the wave field at receiver $r$ and 
angular frequency $\omega$, we can only measure the intensity
$|\wh{u}_r(\omega)|^2$. Intensity measurements arise in a variety
of physical problems, e.g. when the response time of a
receiver is much larger than a typical wave period. Such is the case in
optical coherence tomography \cite{Schmitt:1997:OCM,Schmitt:1999:OCT}
and diffraction tomography \cite{Gbur:2002:DTW, Gbur:2004:ICS}. In these
situations, intensities are much easier and more cost-effective to
measure than full waveform data.

The setup we analyze consists of one source and $N$ receivers, all of
known location. The receivers can only record intensities and only the
source intensity is known. If the scattered field is small compared to
the probing field (at the receivers) then the scattered field projected
onto a known subspace can be found from the intensity data by solving an
underdetermined real least squares problem of size $N \times 2N$ (per
frequency sample). This system is underdetermined because we are trying
to use the intensity data, i.e.  $N$ real measurements, to recover the
scattered field, i.e. $N$ complex or $2N$ real numbers. Fortunately, a
stationary phase argument shows that the error made by projecting the
scattered field does not affect Kirchhoff imaging (for high
frequencies).  Moreover the least-squares problem is typically
well-conditioned and its solution is embarrassingly simple: it merely
costs about $2N$ complex operations (additions or multiplications). Hence
our method is comparable in computational cost to Kirchhoff migration.
Well-known resolution studies for Kirchhoff migration can also be used for our method.

\subsection{Related work for imaging with intensities}

One way of imaging with intensities is called ``phase retrieval'' and
consists of first recovering the phases from the intensity data and then
using the reconstructed field to image. Examples of this approach
include using intensity measurements at two different planes to recover
full waveform measurements at a single plane \cite{Gbur:2002:DTW},
iterative approaches e.g. \cite{Gerchberg:1972:GSA, Fienup:1982:PRA,
Maleki:1993:PRI, Crocco:2004:ISPMCC} and using differential identities
to relate intensity measurements with full waveform data (e.g.
\cite{Teague:1983:DPR, Holmes:2004:CRE}).  Other methods treat
intensity-only measurements as noisy measurements of full waveform data
(e.g. \cite{Devaney:1989:SDIM}) or use optimization techniques to fit
assumed models of scatterers to measured intensity data (e.g.
\cite{Takenaka:1997:RAIM}). 

The problem of imaging a few point scatterers can be reformulated as a
convex optimization problem involving low rank matrices
\cite{Chai:2011:AII, Candes:2013:PL, Xin:2015:PLO}.  Alternatively the
polarization identity $4\Re(\vu^*\vv)=\|\vu+\vv\|^2-\|\vu-\vv\|^2,
\vu,\vv\in\complex^N$, and linear combinations of single source
experiments can be used to recover dot products of two single source
experiments \cite{Novikov:2014:ISI}. Notice that recovering the dot
product $\vu^* \vv$ from the polarization identity requires multiplying
$\vu$ and $\vv$ by $\pm 1$ or $\pm \imath$. This requires manipulation of
the source phases, by e.g. introducing delays.  MUSIC can then be used
to image with this quadratic functional of the full waveform data
\cite{Novikov:2014:ISI}. Instead of directly controlling the source
phases, in \cite{Bardsley:2015:PCSP} we use two sources that send
exactly the same signal from two different locations. The problem of
recovering the full-waveform scattered field as measured at one receiver
location is formulated as a least-squares problem, which is analyzed in
\cite{Bardsley:2015:PCSP}.  The least-squares systems are typically $2N
\times 2N$ and the scattered field can be recovered up to a one
dimensional nullspace that does not affect the Kirchhoff images.  In
contrast, the present method requires less measurements (only $N$), no
pairwise illuminations (remark~\ref{rem:previous}) and the least-squares
systems are trivial to solve and (usually) well-conditioned.

\subsection{Imaging with correlations}
Correlations are used for imaging when the sources are not well known.
For example in seismic imaging the sources may have unknown locations
\cite{Schuster:1996:RLC,Schuster:2004:ISI,Schuster:2009:SI} and may even
consist of ambient noise
\cite{Garnier:2009:PSI,Garnier:2010:RAI,Garnier:2015:SNR}. Fortunately,
the correlation of recordings at two points contains information about
the Green function of the medium between the two points
\cite{Garnier:2009:PSI}, which can be used to form an image of the
medium.  Correlations are also used in radar imaging
\cite{Cheney:2009:FRI}, since the operating frequencies make it
impractical to measure the phases at the receivers. In fact even
stochastic signals can be used in place of deterministic signals for the
probing fields \cite{Tarchi:2010:SARNR,Vela:2012:NRT}. 

As in \cite{Bardsley:2015:PCSP} we observe that autocorrelations (i.e.
correlating the recorded signal with itself) are equivalent to intensity
measurements (by the Wiener-Khinchin theorem, see e.g.
\cite{Ishimaru:1997:WPS}). Therefore the method we present here can be
applied to the case where only the autocorrelation (or the power
spectrum) of the source is known and autocorrelations (or power spectra)
are measured at the receivers. Since correlations are robust to additive
noise, we expect our method to work in low signal to noise ratio
situations, as we illustrate with numerical experiments.

\subsection{Contents}
The physical setup and notations we use are described in
\S\ref{sec:setup} and we briefly review Kirchhoff migration in
\S\ref{sec:kirchhoffrev}. Using the Born approximation, we formulate the
problem of recovering the full wave scattered field at the array from
intensity-only measurements as a linear least squares problem
\S\ref{sec:ionly}. In \S\ref{sec:phaseret} we analyze and solve the
least squares problem and show that its solution can be used with
Kirchhoff migration. We extend this imaging method to stochastic
illuminations and autocorrelation measurements in \S\ref{sec:stocillum}.
Numerical experiments for an optic regime are provided in
\S\ref{sec:numerics} and we conclude with a discussion in
\S\ref{sec:discussion}.

\section{Wave propagation and intensity-only
measurements}\label{sec:waves} Here we introduce the setup we work with
and briefly recall Kirchhoff migration. Hereinafter we use the Fourier
transform convention for functions of time $t$:
\begin{equation}\label{eq:ft}
\wh{f}(\omega) = \int_{-\infty}^\infty dt f(t)e^{\ii\omega
t}, ~ f(t) = \frac{1}{2\pi}\int_{-\infty}^\infty
d\omega\wh{f}(\omega)e^{-\ii\omega t},
~~\mbox{for}~~
f(t),~\wh{f}(\omega)\in L^2(\real).
\end{equation}
\subsection{Wave propagation in a homogeneous medium}\label{sec:setup}
The physical setup we consider is depicted in figure~\ref{fig:setup}. We
probe the medium with a point source located at $\vec{\vx}_s$. Waves are
recorded on an array of receivers $\vec{\vx}_r = (\vx_r,0)\in\sA$ for
$r=1,\ldots, N$, where $\sA\subset\real^d\times\{0\}$ and $d=2,3$ is the
dimension. We use the notation $\vx$ for the first $d-1$ components of a vector $\vec{\vx} \in \real^d$. For simplicity we consider a linear array in 2D or a square
array in 3D, i.e.  $\sA=[-a/2,a/2]^{d-1}\times\{0\}$, however other
shapes may be considered. We impose only mild conditions on the
positions of the source and receivers, in particular that the source
is not in the array. We assume the medium contains scatterers with
reflectivity $\rho(\vec{\vy})$ with $\supp(\rho)=\sR$, and background
wave velocity $c_0$. 

\begin{figure}
\centering
\includegraphics[width=0.5\textwidth]{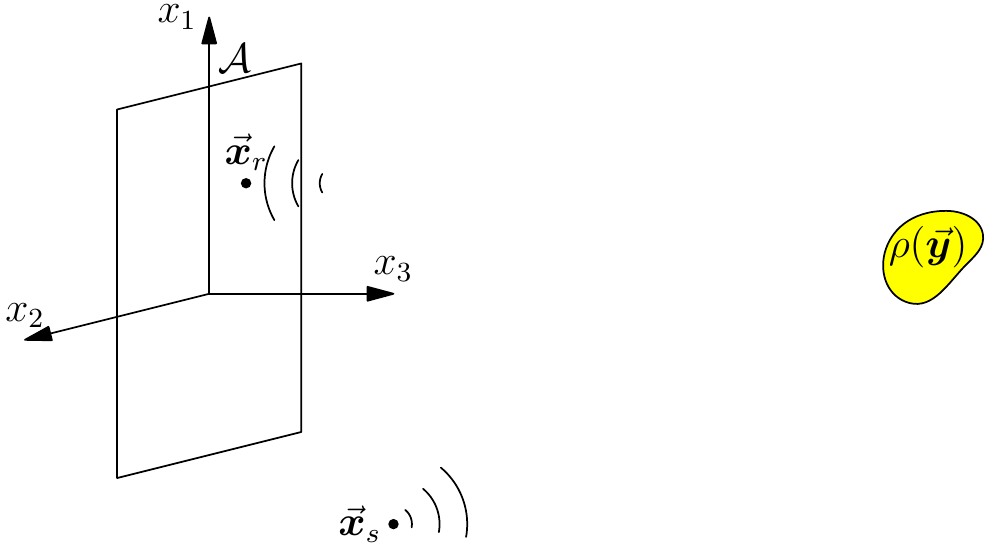}
\caption{Physical setup for an array $\sA$ of receivers with a wave
source located at $\vec{\vx}_s$. The scatterer is characterized by the
compactly supported function $\rho(\vec{\vy})$ (in yellow).}\label{fig:setup}
\end{figure}

The total field arriving at the receiver location $\vec{\vx}_r$ from
frequency modulation $\wh{f}(\omega)$ at the source location
$\vec{\vx}_s$ is
\begin{equation}\label{eq:totalfield}
\wh{u}(\vec{\vx}_r,\vec{\vx}_s,\omega) =
\wh{G}(\vec{\vx}_r,\vec{\vx}_s,\omega)\wh{f}(\omega),
\end{equation}
where $\wh{G}$ is the Green's function for the (inhomogeneous) medium in the
frequency domain. We assume the scatterers are weak so that multiple
scatterings may be neglected and by the Born approximation
\begin{equation}\label{eq:bornappx}
\wh{G}(\vec{\vx}_r,\vec{\vx}_s,\omega) \approx
\wh{G}_0(\vec{\vx}_r,\vec{\vx}_s,\omega)+k^2\int_{\sR}d\vec{\vy}\rho(\vec{\vy})\wh{G}_0(\vec{\vx}_r,\vec{\vy},\omega)\wh{G}_0(\vec{\vy},\vec{\vx}_s,\omega),
\end{equation}
where $\wh{G}_0$ is the Green's function for the Helmholtz equation:
\begin{equation}\label{eq:greenf}
\wh{G}_0(\vec{\vx},\vec{\vy},\omega) =
\left\{\renewcommand{\arraystretch}{2.5}\begin{array}{ll}
\displaystyle\frac{\ii}{4}H_0^{(1)}(k|\vec{\vx}-\vec{\vy}|), & d=2,\\
\displaystyle\frac{\exp(\ii k|\vec{\vx}-\vec{\vy}|)}{4\pi|\vec{\vx}-\vec{\vy}|}, &
d=3.
\end{array}\right.
\end{equation}
Here $k=\omega/c_0$ is the wave number.

We express the total fields received on the array with linear algebra
notation as
\[
u(\vec{\vx}_r,\vec{\vx}_s,\omega) 
=
\ve_r^\tr\big(\vg_0(\vec{\vx}_s,\omega)+\vp(\vec{\vx}_s,\omega)\big)\wh{f}(\omega),\quad\text{for
$r=1,\ldots,N$,}
\]
where the vector $\vg_0$ is the vector of direct arrivals (or incident
field) at the array
\begin{equation}\label{eq:incvec}
\vg_0(\vec{\vx}_s,\omega) = \begin{bmatrix}
\wh{G}_0(\vec{\vx}_1,\vec{\vx}_s,\omega),
				\wh{G}_0(\vec{\vx}_2,\vec{\vx}_s,\omega),
				\cdots,
				\wh{G}_0(\vec{\vx}_N,\vec{\vx}_s,\omega)
				\end{bmatrix}^\tr,
\end{equation}
and the array response vector (or scattered field at the array) is
\begin{equation}\label{eq:refvec}
\vp(\vec{\vx}_s,\omega) = k^2\int_\sR d\vec{\vz}
\vg_0(\vec{\vz},\omega)\wh{G}_0(\vec{\vx}_s,\vec{\vz},\omega)\rho(\vec{\vz}).
\end{equation}

\subsection{Kirchhoff migration}\label{sec:kirchhoffrev}
When full waveform measurements are available, i.e.
$u(\vec{\vx}_r,\vec{\vx}_s,\omega)$ is known for $r=1,\ldots,N$, the
scattered field $\vp$ can be obtained from the total field at the array,
$\vg_0$ and $\wh{f}(\omega)$. The scatterers in the medium can be imaged 
with Kirchhoff migration applied to $\vp$,
which for a single frequency $\omega$ has the form:
\begin{equation}\label{eq:kirchhoff} \KM\big[\vp,\omega\big](\vec{\vy})
=
\ov{\wh{G}}_0(\vec{\vx}_s,\vec{\vy},\omega)\vg_0(\vec{\vy},\omega)^*\vp(\vec{\vx}_s,\omega).
\end{equation} Here $\vec{\vy}$ is a point in the image. The Kirchhoff
migration functional has been studied extensively (see e.g.
\citet{Bleistein:2001:MMI} for a review). In the cross-range (the
direction parallel to the array), we can expect a resolution of $\lambda
L/a$ where $\lambda=2\pi/k$ is the wavelength, $L$ is the array to
scatterer distance and $a$ is array aperture. This is the Rayleigh
resolution limit. To obtain resolution in the range direction (the
direction perpendicular to the array), $\KM[\vp,\omega]$ needs to be
integrated over a frequency band $\sB$, e.g. $\sB =
[-\omega_\text{max},-\omega_\text{min}]\cup[\omega_\text{min},\omega_\text{max}]$.
In this case we can expect the resolution to be
$c_0/(\omega_\text{max}-\omega_\text{min})$.

\subsection{Intensity-only measurements}\label{sec:ionly}
Using the illumination $\wh{f}(\omega)$ at the source location
$\vec{\vx}_s$, the \emph{intensity-only} measurement of the wave
field at $\vec{\vx}_r\in\sA$ is:
\begin{equation}
|u(\vec{\vx}_r,\vec{\vx}_s,\omega)|^2 =
|\wh{f}(\omega)|^2\ve_r^\tr\left[\ov{(\vg_0+\vp)} \odot
(\vg_0+\vp)\right],
\label{eq:intensity}
\end{equation}
where the operator $\odot$ denotes the componentwise or Hadamard product of two
vectors and $\{\ve_r\}_{r=1}^N$ is 
the standard orthonormal basis of $\real^N$. Our objective is to find as
much as we can about $\vp$ from the vector of measurements
$[|u(\vec{\vx}_r,\vec{\vx}_s,\omega)|^2]_{r=1,\ldots,N}$. This is done
by linearization, so we need to assume that the scattered field $\vp$
is small compared to the direct arrival $\vg_0$ at the array.
\begin{assumption}
\label{assump:smallness}
The position of the receivers, the source and the reflectivity are such
that $|\ve_r^\tr \vp| \ll |\ve_r^\tr \vg_0|$, $r=1,\ldots,N$.
\end{assumption}
This assumption is satisfied e.g. if the reflectivity is sufficiently small
and the source $\vec{\vx}_s$ is near the receiver array (as is shown
in figure~\ref{fig:setup}). With assumption~\ref{assump:smallness} we
can neglect quadratic terms in $\vp$ to approximate the intensity
measurements \eqref{eq:intensity} by a vector $\vd(\vec{\vx}_s,\omega)$ defined by
\[
|u(\vec{\vx}_r,\vec{\vx}_s,\omega)|^2 \approx \ve_r^\tr
\vd(\vec{\vx}_s,\omega) \equiv |\wh{f}(\omega)|^2\ve_r^\tr\Re\left[\ov{\vg}_0
\odot (\vg_0+2\vp)\right].
\]
This is not, strictly speaking, a linear system for $\vp \in \complex^N$
since $\vu \to \Re(\vu)$ is not a linear mapping from $\complex^N$ to
$\complex^N$.  However we can write an underdetermined linear system for
the real and imaginary parts of $\vg_0+\vp$ as follows
\begin{equation}\label{eq:linsys}
|\wh{f}(\omega)|^2\mM(\vec{\vx}_s,\omega)\begin{bmatrix}\Re(\vg_0+2\vp)\\\Im(\vg_0+2\vp)\end{bmatrix}
= \vd(\vec{\vx}_s,\omega),
\end{equation}
where the matrix $\mM(\vec{\vx}_s,\omega)\in\real^{N\times 2N}$ is given by 
\begin{equation}\label{eq:measmx}
\mM(\vec{\vx}_s,\omega) = \begin{bmatrix} \diag\big(\Re(\vg_0)\big) &
\diag\big(\Im(\vg_0)\big) \end{bmatrix}.
\end{equation}
We give in the next section an explicit solution to the least squares
problem \eqref{eq:linsys}.

\section{Migrating a least-squares estimate of the scattered field}\label{sec:phaseret} 
The first step in our imaging method consists of a cheap
least-squares preprocessing step  that gives an
approximation to the array response vector (\S\ref{sec:nullspace}). The second step is to
migrate this approximation with 
standard Kirchhoff migration \S\ref{sec:kirchhoff}. Crucially we show in
theorem~\ref{thm:km} that the mistake we make by using this
approximation of the array response vector does not affect the Kirchhoff images.

\subsection{Recovering a projection of the array response vector}\label{sec:nullspace}

We start by finding a simple and explicit expression to the
pseudoinverse of the matrix $\mM$ that we obtained from linearizing the
problem of finding the real and imaginary parts of the array response
vector $\vp$. This can be used to recover from the data $\vd$ the
orthogonal projection of $[\Re(\vp)^\tr,\Im(\vp)^\tr]^\tr$ onto a known
$N$ dimensional subspace (that depends only on $\vg_0$). Moreover the
process is well conditioned.

First notice that the matrix $\mM$ is full-rank. Indeed a simple
calculation gives that $\mM \mM^\tr = \diag(\ov{\vg}_0 \odot \vg_0)$. This
matrix is clearly invertible because it is a diagonal matrix with the moduli
of 2D or 3D Green functions on the diagonal.  Hence the
Moore-Penrose pseudoinverse $\mM^\dagger$ can be written explicitly 
\begin{equation}\label{eq:mdagger}
\mM^\dagger = \mM^\tr(\mM\mM^\tr)^{-1}  =
\begin{bmatrix}\diag\big(\Re(\vg_0)\big)\\\diag\big(\Im(\vg_0)\big)\end{bmatrix}\diag(\ov{\vg}_0\odot\vg_0)^{-1}.
\end{equation}
We can use $\mM^\dagger$ to see what information about $\vp$ we can
recover from the right hand side $\vd$ in the least-squares problem
\eqref{eq:linsys}. Since $\mM$ has an $N$ dimensional nullspace, we can
only expect to recover the orthogonal projection of $[\Re(\vp)^\tr,
\Im(\vp)^\tr]^\tr$ onto $\range(\mM^\tr) = \left(\nullspace(\mM)\right)^\perp$. This
projection has a simple form when we write it in $\complex^N$, as can be
seen in the next proposition.
\begin{proposition}\label{prop:invert}
Provided $|\wh{f}(\omega)|^2\neq 0$, the intensity measurements $\vd$
determine 
\begin{equation}\label{eq:recp}
\wt{\vp} \equiv \vp+(\ov{\vg}_0)^{-1} \odot \vg_0 \odot \ov{\vp},
\end{equation}
where the inverse of a
vector is understood componentwise. Moreover $\wt{\vp}$ can be obtained
in about $2N$ complex operations from $\vd$ with
\begin{equation}
 \label{eq:recpbis}
 \wt{\vp} = |\wh{f}(\omega)|^{-2} (\ov{\vg}_0)^{-1} \odot \vd -
\vg_0.
\end{equation}
\end{proposition}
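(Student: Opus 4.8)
The plan is to exhibit $\wt{\vp}$ as an explicit function of the data $\vd$, which simultaneously establishes that $\vd$ determines $\wt{\vp}$ and proves the closed form \eqref{eq:recpbis}. I would begin from the observation that the real system \eqref{eq:linsys} is consistent: writing $\vx = [\Re(\vg_0+2\vp)^\tr,\Im(\vg_0+2\vp)^\tr]^\tr$, the $r$-th entry of $\mM\vx$ is $\Re(\ve_r^\tr\ov{\vg}_0)\Re(\ve_r^\tr(\vg_0+2\vp)) + \Im(\ve_r^\tr\ov{\vg}_0)\Im(\ve_r^\tr(\vg_0+2\vp)) = \Re(\ve_r^\tr[\ov{\vg}_0\odot(\vg_0+2\vp)])$, so $|\wh{f}(\omega)|^2\mM\vx = \vd$ exactly. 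Since $\mM$ has full row rank (already shown via $\mM\mM^\tr = \diag(\ov{\vg}_0\odot\vg_0)$ invertible), the minimum-norm solution of \eqref{eq:linsys} is $\vx_\star = |\wh{f}(\omega)|^{-2}\mM^\dagger\vd$, and $\vx_\star$ equals the orthogonal projection of $\vx$ onto $\range(\mM^\tr) = \nullspace(\mM)^\perp$. This is the only part of $\vx$, and hence of $\vp$, that the intensities can pin down.

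Next I would convert the real $2N$-vector $\vx_\star$ back to a complex $N$-vector by reading its top block as a real part and its bottom block as an imaginary part. Using the explicit pseudoinverse \eqref{eq:mdagger}, the $j$-th complex entry is $[\Re(g_j)+\ii\,\Im(g_j)]\,(\ve_j^\tr\vd)/(|\wh{f}(\omega)|^2|g_j|^2) = g_j(\ve_j^\tr\vd)/(|\wh{f}(\omega)|^2|g_j|^2)$, where $g_j = \ve_j^\tr\vg_0$. Since $g_j/|g_j|^2 = 1/\ov{g_j}$ (the Green's function entries are nonzero), this complex vector equals $|\wh{f}(\omega)|^{-2}(\ov{\vg}_0)^{-1}\odot\vd$. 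Subtracting the known incident field $\vg_0$ then yields exactly the right-hand side of \eqref{eq:recpbis}; this step is pure Hadamard/diagonal bookkeeping.

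Finally I would verify that this quantity is the $\wt{\vp}$ of \eqref{eq:recp} by substituting the definition of $\vd$ and simplifying componentwise. Expanding $\ve_j^\tr(|\wh{f}(\omega)|^{-2}\vd) = \Re[\ov{g_j}(g_j+2p_j)] = |g_j|^2 + \ov{g_j}p_j + g_j\ov{p_j}$ with $p_j = \ve_j^\tr\vp$, dividing by $\ov{g_j}$, and cancelling the resulting $g_j$ against the subtracted incident field leaves $p_j + (g_j/\ov{g_j})\ov{p_j}$, which is the $j$-th entry of $\vp + (\ov{\vg}_0)^{-1}\odot\vg_0\odot\ov{\vp}$. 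The cost estimate follows by inspection of \eqref{eq:recpbis}, which needs only $N$ reciprocals, $N$ products, and one subtraction.

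I do not expect a serious obstacle here; the one genuinely non-routine point, which I would flag explicitly, is that $\vu\mapsto\Re(\vu)$ is not $\complex$-linear, so the scattered field cannot be recovered by inverting a complex linear system. This is precisely why the detour through the real $N\times 2N$ system and its $N$-dimensional nullspace is forced, and why only the projected combination $\wt{\vp}$---rather than $\vp$ itself---survives. Keeping careful track of the real/imaginary bookkeeping when re-complexifying $\vx_\star$ is the only place where an error could creep in.
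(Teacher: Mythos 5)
Your proposal is correct and follows essentially the same route as the paper: both rest on the explicit pseudoinverse \eqref{eq:mdagger}, the identity $\begin{bmatrix} \mI & i \mI \end{bmatrix} \mM^\dagger = \diag(\ov{\vg}_0)^{-1}$ (your componentwise $g_j/|g_j|^2 = 1/\ov{g_j}$), and the same algebraic simplification of $(\ov{\vg}_0)^{-1}\odot\vd$ using the linearized form of the data. Your added remarks on consistency of \eqref{eq:linsys} and on $\vx_\star$ being the projection onto $\range(\mM^\tr)$ make explicit what the paper leaves implicit, but the computation is the same.
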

\begin{proof}
Since we use the first (resp. last) $N$ rows of $\mM^\dagger$ to
recover the real (resp. imaginary) part of a vector in $\complex^N$, it
is convenient to consider the matrix
\[
  \begin{bmatrix} \mI & i \mI \end{bmatrix} \mM^\dagger = \diag(\vg_0)
  \diag(\ov{\vg}_0 \odot \vg_0)^{-1} = 
  \diag(\ov{\vg}_0)^{-1},
\]
where $\mI$ is the $N\times N$ identity matrix. To see what information
about $\vp$ we can recover from the right hand side $\vd$ in the least-squares system
\eqref{eq:linsys} we can evaluate:
\[
 \begin{aligned}
  |\wh{f}(\omega)|^{-2} \begin{bmatrix} \mI & i \mI \end{bmatrix}
  \mM^\dagger \vd &= 
  \diag(\ov{\vg}_0)^{-1} \mM \begin{bmatrix} \Re(\vg_0 + 2\vp)\\
 \Im(\vg_0 + 2\vp)\end{bmatrix}\\
 &=  \diag(\ov{\vg}_0)^{-1} [  \Re(\vg_0)^2 + \Im(\vg_0)^2  + 2 \Re(\vg_0)
 \Re(\vp) + 2\Im(\vg_0) \Im(\vp) ]\\
 &= \vg_0 + \diag(\ov{\vg}_0)^{-1}(\vg_0 \odot \ov{\vp} + \ov{\vg}_0
 \odot \vp)\\
 &=\vg_0 + \vp + \vg_0 \odot(\ov{\vg}_0)^{-1} \odot \ov{\vp} =\vg_0 +
 \wt{\vp}.
 \end{aligned}
\]
Hence we can get $\wt{\vp}$ from the intensity data $\vd$ with
essentially $N$ complex multiplications and $N$ complex additions.

\end{proof}
A natural question to ask is whether we can obtain $\wt{\vp}$ in a
stable manner from $\vd$. This can be answered by looking at the
conditioning of $\mM$, i.e. the ratio of the largest singular value
$\sigma_1$ of $\mM$ to $\sigma_N$, the smallest one. These are easily
obtained from the square roots of the eigenvalues of the diagonal matrix
$\mM \mM^T  = \diag(\ov{\vg}_0 \odot \vg_0)$. Hence the conditioning of
$\mM$ is the ratio of the largest to the smallest moduli of the entries
of $\vg_0$:
\begin{equation}\label{eq:condition}
\cond(\mM) = \begin{cases}
\displaystyle \frac{\max_r
\big|H_0^{(1)}(k|\vec{\vx}_r-\vec{\vx}_s|)\big|}{\min_r
\big|H_0^{(1)}(k|\vec{\vx}_r-\vec{\vx}_s|)\big|}, & \text{for $d=2,$}\\
\displaystyle\frac{\max_r\big|\vec{\vx}_r-\vec{\vx}_s\big|}{\min_r\big|\vec{\vx}_r-\vec{\vx}_s\big|},
&\text{for $d=3.$}
\end{cases}
\end{equation}

In figure~\ref{fig:condition} we show the condition number of
$\mM(\vec{\vx}_s,\omega)$ plotted over an optical frequency band. The
experimental setup is that given in \S\ref{sec:numerics}. The condition
number \eqref{eq:condition} is clearly independent of frequency for
$d=3$ and for $d=2$ we have the approximation for high frequencies: 
\[
\cond\mM(\vec{\vx}_s,\omega) =
\frac{\max_r|\vec{\vx}_r-\vec{\vx}_s|^{1/2}}{\min_r|\vec{\vx}_r-\vec{\vx}_s|^{1/2}}(1+\bigO(1/\omega)),\quad\text{as
$\omega\to\infty$.}
\]
This approximation follows from the Hankel function asymptotic (see e.g.
\cite{Olver:2010:NHMF})
\[
H_0^{(1)}(t) = \sqrt{\frac{2}{\pi
t}}\exp[\ii(t-\pi/4)](1+\bigO(1/t)),\quad\text{as $t\to\infty$.}
\] 
Thus the conditioning of $\mM$ is determined by the ratio of largest to
smallest source-to-receiver distances.
\begin{figure}[!hbtp]
\centering
\includegraphics[width=0.5\textwidth]{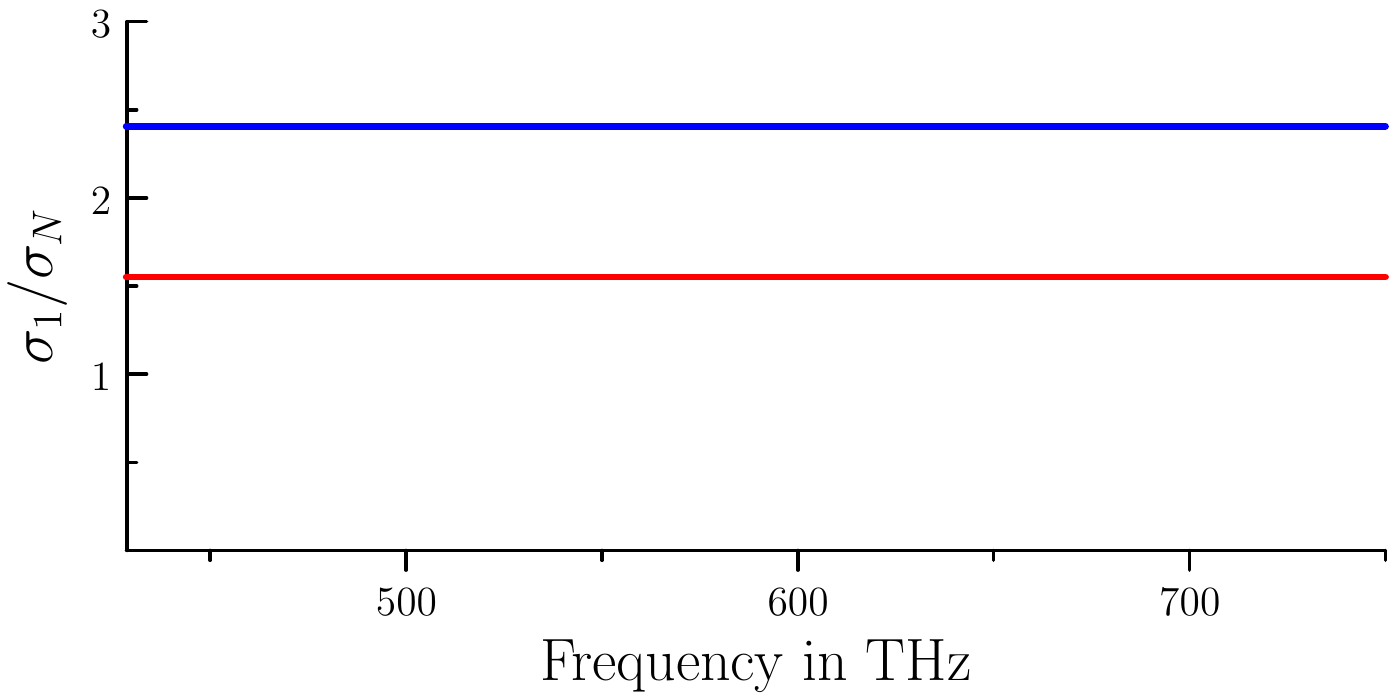}
\caption{Condition number of $\mM(\vec{\vx}_s,\omega)$ for $d=2$ (red)
and $d=3$ (blue) for the setup given in \S\ref{sec:numerics}}\label{fig:condition}
\end{figure}

\subsection{Kirchhoff migration}\label{sec:kirchhoff} We now show that
migrating the recovered data $\wt{\vp}$ \eqref{eq:recp} using $\KM$
gives essentially the same image as migrating the true data $\vp$. We
establish this result by means of a stationary phase argument but in
order to do this, we need the following assumption on the location of
the source $\vec{\vx}_s$.

\begin{assumption}[Geometric imaging conditions]\label{assump:geoimg}
For a scattering potential with support contained inside an image window
$\sW$, we assume $\vec{\vx}_s$ satisfies
\[
\frac{\vec{\vx}_r-\vec{\vx}_s}{|\vec{\vx}_r-\vec{\vx}_s|} \neq
\frac{\vec{\vx}_r-\vec{\vy}}{|\vec{\vx}_r-\vec{\vy}|},
\]
for $r=1,\ldots,N,$ and $\vec{\vy}\in\sW$.
\end{assumption}

We interpret this assumption as a restriction on the placement of our
source location $\vec{\vx}_s$ as follows. Fix a receiver position $\vec{\vx}_r$ and
consider the cone 
\[
\sK(\vec{\vx}_r) =
\left\{\alpha\frac{\vec{\vy}-\vec{\vx}_r}{|\vec{\vy}-\vec{\vx}_r|}:\alpha>0,\vec{\vy}\in\sW\right\}.
\]
As long as $\vec{\vx}_s\notin\sK(\vec{\vx}_r)$, then we have that
$(\vec{\vx}_s-\vec{\vx}_r)/|\vec{\vx}_s-\vec{\vx}_r|\neq(\vec{\vy}-\vec{\vx}_r)/|\vec{\vy}-\vec{\vx}_r|$
for any $\vec{\vy}\in\sW$, i.e.\@ assumption~\ref{assump:geoimg} holds for
$\vec{\vx}_r$.  Ensuring this is satisfied for all receiver locations
$\vec{\vx}_r$ for $r=1,\ldots,N$, we require
$\vec{\vx}_s\notin\cup_{r=1}^N\sK(\vec{\vx}_r)$. In
figure~\ref{fig:geoimg} we illustrate this assumption. Here, the dark
blue region depicts the cone $\sK(\vec{\vx}_r)$ while the union of cones
$\cup_{r=1}^N\sK(\vec{\vx}_r)$ is depicted by the light blue region.
Assumption~\ref{assump:geoimg} simply requires $\vec{\vx}_s$ to be
outside the light blue region.

\begin{figure}[!hbtp]
\includegraphics[width=0.7\textwidth]{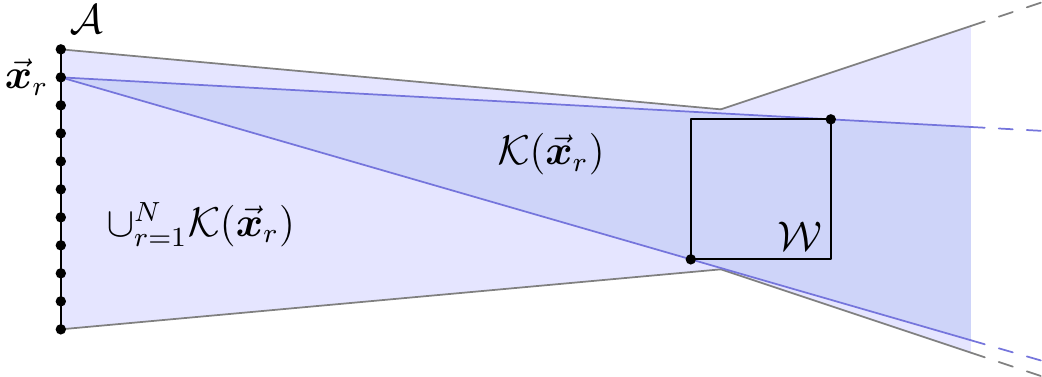}
\caption{Illustration of assumption~\ref{assump:geoimg}. If 
$\vec{\vx}_s$ is outside of the light blue region then
$(\vec{\vx}_r-\vec{\vx}_s)/|\vec{\vx}_r-\vec{\vx}_s|\neq
(\vec{\vx}_r-\vec{\vy})/|\vec{\vx}_r-\vec{\vy}|$ for all
$\vec{\vx}_r\in\sA$ and $\vec{\vy}\in\sW$.}\label{fig:geoimg}
\end{figure}

\begin{theorem}\label{thm:km}
Provided assumption~\ref{assump:geoimg} holds, the image of the
reconstructed array response vector is
\[
\KM\big[\vp+(\ov{\vg}_0)^{-1}\odot\vg_0\odot\ov{\vp},\omega\big](\vec{\vy})
\approx \KM\big[\vp,\omega\big](\vec{\vy}).
\]
\end{theorem}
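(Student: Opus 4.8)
The plan is to use the linearity of $\KM$ in its first argument. Writing $\vw \equiv (\ov{\vg}_0)^{-1}\odot\vg_0\odot\ov{\vp}$ for the difference \eqref{eq:recp} between the reconstructed and the true array response vectors, we have $\KM[\vp+\vw,\omega] = \KM[\vp,\omega] + \KM[\vw,\omega]$, so it suffices to show that the spurious image $\KM[\vw,\omega](\vec{\vy})$ is asymptotically negligible compared to $\KM[\vp,\omega](\vec{\vy})$ as $\omega\to\infty$. First I would record the phase carried by each entry of $\vw$: since $\ve_r^\tr\vg_0 = \wh{G}_0(\vec{\vx}_r,\vec{\vx}_s,\omega)$, in three dimensions $\ve_r^\tr\vw = e^{2\ii k|\vec{\vx}_r-\vec{\vx}_s|}\,\ov{\ve_r^\tr\vp}$ exactly, while in two dimensions the same identity holds up to a constant factor and an $\bigO(1/k)$ correction from the Hankel asymptotics already quoted in \S\ref{sec:nullspace}. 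Thus, to leading order, $\vw$ is the entrywise conjugate of the data multiplied by the oscillatory factor $e^{2\ii k|\vec{\vx}_r-\vec{\vx}_s|}$.

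Next I would substitute the Born expression \eqref{eq:refvec} for $\vp$ into both $\KM[\vp,\omega]$ and $\KM[\vw,\omega]$, turning each image into a sum over receivers of an integral over the scatterer support $\sR$. Collecting the Green's function phases, the true image carries $|\vec{\vx}_r-\vec{\vz}| + |\vec{\vx}_s-\vec{\vz}| - |\vec{\vx}_r-\vec{\vy}| - |\vec{\vx}_s-\vec{\vy}|$, which vanishes identically, for every receiver, on the diagonal $\vec{\vz}=\vec{\vy}$; this is exactly the coherent summation that makes Kirchhoff migration focus at scatterer locations. The conjugation and extra factor in $\vw$ instead produce $2|\vec{\vx}_r-\vec{\vx}_s| - |\vec{\vx}_r-\vec{\vz}| - |\vec{\vx}_s-\vec{\vz}| - |\vec{\vx}_r-\vec{\vy}| - |\vec{\vx}_s-\vec{\vy}|$. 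On the same diagonal $\vec{\vz}=\vec{\vy}$ this reduces, up to an $r$-independent term, to $2\big(|\vec{\vx}_r-\vec{\vx}_s| - |\vec{\vx}_r-\vec{\vy}|\big)$, whose gradient along the array is proportional to the projection of $\tfrac{\vec{\vx}_r-\vec{\vx}_s}{|\vec{\vx}_r-\vec{\vx}_s|} - \tfrac{\vec{\vx}_r-\vec{\vy}}{|\vec{\vx}_r-\vec{\vy}|}$ onto $\sA$. Assumption~\ref{assump:geoimg} guarantees this difference of unit vectors never vanishes, so the spurious phase is non-stationary over the receivers precisely where the true image is supported.

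I would then conclude with a stationary phase estimate on the receiver sum (passing, if convenient, to the continuum limit $\sum_r \to \int_{\sA}$). Where $\KM[\vp,\omega]$ enjoys a stationary, in fact identically vanishing, phase and therefore sums coherently to an $\bigO(1)$ contribution, the non-stationarity just established forces the receiver sum in $\KM[\vw,\omega]$ to decay in $k=\omega/c_0$: at worst like $k^{-(d-1)/2}$ from an isolated stationary point, and faster wherever the phase gradient stays bounded away from zero. Hence $\KM[\vw,\omega](\vec{\vy}) = o\big(\KM[\vp,\omega](\vec{\vy})\big)$ as $\omega\to\infty$, which is the asserted asymptotic identity.

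I expect the main obstacle to be this last step, and specifically tying assumption~\ref{assump:geoimg} — a statement about the \emph{full} unit vectors — to genuine non-stationarity of the phase \emph{restricted} to the $(d-1)$-dimensional array, since the tangential part of the gradient can in principle vanish at an isolated receiver even when the unit vectors differ. The cleanest remedy is to note that the assumption prevents the spurious phase from being constant across the whole array, so any surviving stationary points are isolated and contribute only at the subdominant order $k^{-(d-1)/2}$; some additional care is needed to carry the two-dimensional Hankel amplitudes and the discrete-to-continuum passage through the estimate without disturbing the leading order.
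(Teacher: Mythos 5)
Your overall strategy coincides with the paper's: split off the spurious term $\vw \equiv (\ov{\vg}_0)^{-1}\odot\vg_0\odot\ov{\vp}$ by linearity, identify its phase $2|\vec{\vx}_r-\vec{\vx}_s|-|\vec{\vx}_r-\vec{\vz}|-|\vec{\vz}-\vec{\vx}_s|-|\vec{\vx}_r-\vec{\vy}|-|\vec{\vy}-\vec{\vx}_s|$ after inserting the Born representation \eqref{eq:refvec}, and kill $\KM[\vw,\omega]$ by stationary phase over the array. However, there is a genuine gap: you establish non-stationarity of this phase only on the diagonal $\vec{\vz}=\vec{\vy}$, while the spurious image at a fixed image point $\vec{\vy}$ is an integral over \emph{all} $\vec{\vz}\in\sR$, so stationary points must be excluded for every pair $(\vec{\vz},\vec{\vy})$, not only the diagonal one. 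This is exactly where the paper's key step lives, and it is missing from your proposal: stationarity reads
\[
\frac{\vec{\vx}_r-\vec{\vx}_s}{|\vec{\vx}_r-\vec{\vx}_s|}=\frac{1}{2}\left(\frac{\vec{\vx}_r-\vec{\vy}}{|\vec{\vx}_r-\vec{\vy}|}+\frac{\vec{\vx}_r-\vec{\vz}}{|\vec{\vx}_r-\vec{\vz}|}\right),
\]
and since a unit vector can equal the average of two unit vectors only when they coincide (take norms and use the Cauchy--Schwarz \emph{equality} case), any stationary point forces $(\vec{\vx}_r-\vec{\vy})/|\vec{\vx}_r-\vec{\vy}|=(\vec{\vx}_r-\vec{\vz})/|\vec{\vx}_r-\vec{\vz}|$, and hence $(\vec{\vx}_r-\vec{\vx}_s)/|\vec{\vx}_r-\vec{\vx}_s|=(\vec{\vx}_r-\vec{\vy})/|\vec{\vx}_r-\vec{\vy}|$, which assumption~\ref{assump:geoimg} forbids. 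This reduction is not vacuous: pairs with $\vec{\vz}\neq\vec{\vy}$ that are collinear as seen from some receiver are genuine candidate stationary configurations, and only the assumption (plus the norm argument) rules them out; your diagonal-only analysis never encounters them. Relatedly, your framing that it suffices to control $\KM[\vw,\omega]$ ``precisely where the true image is supported'' undersells what the theorem asserts: the approximation is pointwise for all $\vec{\vy}\in\sW$, so the off-diagonal analysis cannot be avoided.

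Your closing concern -- that for an integral over the $(d-1)$-dimensional array only the \emph{tangential} part of the phase gradient must vanish at a stationary point, which assumption~\ref{assump:geoimg} does not literally exclude -- is a fair observation; the paper's own proof works with the full gradient $\nabla_{\vec{\vx}_r}$ and does not address this distinction. But your proposed remedy does not hold up: the phase failing to be constant across the array does not imply that tangential stationary points are isolated, nor that they are nondegenerate, so the claimed worst-case $k^{-(d-1)/2}$ bound is unjustified as stated. To obtain the conclusion the paper claims (decay faster than any power of $\omega$, from the absence of stationary points) one must rule out tangential stationary points outright, which the paper does implicitly by identifying stationarity with the vanishing of the full gradient.
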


\begin{proof}
We begin by approximating the Kirchhoff migration functional
\eqref{eq:kirchhoff} by an integral over the array $\sA$:
\begin{equation}\label{eq:applykm2}
\begin{aligned}
\KM&\left[(\ov{\vg}_0)^{-1}\odot\vg_0\odot\ov{\vp}\right](\vec{\vy})\\
&=\ov{\wh{G}}_0(\vec{\vx}_s,\vec{\vy},\omega)\vg_0(\vec{\vy},\omega)^*\left[(\ov{\vg}_0(\vec{\vx}_s,\omega))^{-1}\odot\vg_0(\vec{\vx}_s,\omega)\odot\ov{\vp}(\vec{\vx}_s,\omega)\right]\\
&\sim k^2\int_\sR
d\vec{\vz}\int_{\sA}d{\vx}_rC(\vec{\vx}_s,\vec{\vx}_r,\vec{\vz},\vec{\vy})\times\\
&\quad\exp\left(\ii k(2|\vec{\vx}_r-\vec{\vx}_s|-|\vec{\vx}_r-\vec{\vz}|-|\vec{\vz}-\vec{\vx}_s|-|\vec{\vx}_r-\vec{\vy}|-|\vec{\vy}-\vec{\vx}_s|)\right).
\end{aligned}
\end{equation}
Here $\sim$ denotes equal up to a constant and
$C(\vec{\vx}_s,\vec{\vx}_r,\vec{\vz},\vec{\vy})$ is a smooth real valued
function that collects the various $\wh{G}_0$ geometric spreading terms.

Now we apply the method of stationary phase (see e.g.\@
\cite{Bleistein:2001:MMI}) to the integral over $\sA$. In the large
wavenumber limit $k\to\infty$, dominant contributions to the
integral come from stationary points of the phase, i.e.\@ points
$\vec{\vx}_r$ that satisfy
\[
\nabla_{\vec{\vx}_r}\left(2|\vec{\vx}_r-\vec{\vx}_s|-|\vec{\vx}_r-\vec{\vz}|-|\vec{\vz}-\vec{\vx}_s|-|\vec{\vx}_r-\vec{\vy}|-|\vec{\vy}-\vec{\vx}_s|\right)
= 0.
\]
This expression is equivalent to
\begin{equation}\label{eq:convexcomb}
\frac{\vec{\vx}_r-\vec{\vx}_s}{|\vec{\vx}_r-\vec{\vx}_s|}=
\frac{1}{2}\left(\frac{\vec{\vx}_r-\vec{\vy}}{|\vec{\vx}_r-\vec{\vy}|}+\frac{\vec{\vx}_r-\vec{\vz}}{|\vec{\vx}_r-\vec{\vz}|}\right).
\end{equation}
If \eqref{eq:convexcomb} holds then we must have
\begin{equation}\label{eq:cs}
\begin{aligned}
\left|\frac{\vec{\vx}_r-\vec{\vx}_s}{|\vec{\vx}_r-\vec{\vx}_s|}\right|^2 =
\frac{1}{4}\left|\frac{\vec{\vx}_r-\vec{\vy}}{|\vec{\vx}_r-\vec{\vy}|}+\frac{\vec{\vx}_r-\vec{\vz}}{|\vec{\vx}_r-\vec{\vz}|}\right|^2\\
\iff 1 =
\frac{\vec{\vx}_r-\vec{\vy}}{|\vec{\vx}_r-\vec{\vy}|}\cdot\frac{\vec{\vx}_r-\vec{\vz}}{|\vec{\vx}_r-\vec{\vz}|}.
\end{aligned}
\end{equation}
Since $(\vec{\vx}_r-\vec{\vy})/|\vec{\vx}_r-\vec{\vy}|$ and
$(\vec{\vx}_r-\vec{\vz})/|\vec{\vx}_r-\vec{\vz}|$ are both unit
vectors, it follows from the Cauchy-Schwarz \emph{equality} that
\eqref{eq:cs} holds only if $\vec{\vz}=\vec{\vy}$. Thus stationary
points must satisfy 
\[
\frac{\vec{\vx}_r-\vec{\vx}_s}{|\vec{\vx}_r-\vec{\vx}_s|} =
\frac{\vec{\vx}_r-\vec{\vy}}{|\vec{\vx}_r-\vec{\vy}|},
\]
where $\vec{\vy}\in\sW$. By assumption~\ref{assump:geoimg} there are no
such stationary points and therefore, neglecting boundary effects, this
integral vanishes faster than any polynomial power of $\omega$ (see
e.g.\@ \cite{Bleistein:1986:AEI}).
\end{proof}

\begin{remark}\label{rem:previous} We used a similar idea in
\cite{Bardsley:2015:PCSP} to show that with multiple sources, a single
receiver, and a specific pairwise illumination scheme it is possible to
image with sole knowledge of the intensities of the wave fields at the
receiver and of the probing fields. We approached the problem by
estimating the array response vector (a vector in $\complex^N$) with
$2N$ (or more) real measurements, which are essentially the measured
intensities for $2N$ or more different pairs of sources. The results of
\S \ref{sec:nullspace} and \S\ref{sec:kirchhoff} can be modified by
reciprocity to apply to the setup we considered in
\cite{Bardsley:2015:PCSP}. Hence images similar to those in
\cite{Bardsley:2015:PCSP} can be obtained without the pairwise
illumination scheme and the number of required illuminations is reduced
from $3N$ to $N$.  \end{remark}

\section{Stochastic illuminations and autocorrelations}\label{sec:stocillum} 
Our imaging method can also be used when the
source is driven by a stationary stochastic process (for which we only
assume knowledge of the autocorrelation or power spectra) and only
empirical autocorrelations are measured at the receiver locations. 

To be more precise, the source at $\vec{\vx}_s$ is driven by $f(t)$, a
stationary mean zero Gaussian process with autocorrelation function 
\begin{equation}\label{eq:fautocorr}
\langle \ov{f}(t)f(t+\tau)\rangle = F(\tau).
\end{equation}
Here $\langle\cdot\rangle$ denotes expectation with respect to
realizations of $f$ and we recall that $F(\tau) = \ov{F}(-\tau)$. In the
time domain, the field recorded at $\vec{\vx}_r$ is
\[
u(\vec{\vx}_r,\vec{\vx}_s,t) = \frac{1}{2\pi}\int d\omega e^{-\ii\omega t} 
\wh{G}(\vec{\vx}_r,\vec{\vx}_s,\omega)\wh{f}(\omega),\qquad\text{for
$r=1,\ldots,N$},
\]
where we assume $\wh{G}$ is given by the Born approximation
\eqref{eq:bornappx}.

The measurements at the receiver locations $\vec{\vx}_r$ are the
empirical autocorrelations:
\begin{equation}\label{eq:empautocorr}
\C(\vec{\vx}_r,\vec{\vx}_s,\tau) = \frac{1}{2T}\int_{-T}^Tdt
\ov{u}(\vec{\vx}_r,\vec{\vx}_s,t)u(\vec{\vx}_r,\vec{\vx}_s,t+\tau)\qquad\text{for
$r=1,\ldots,N,$}
\end{equation}
where $T$ is a fixed acquisition time. As shown by
\citet{Garnier:2009:PSI}, these measurements are independent of the
acquisition time $T$ and ergodic as we summarize in the following
proposition.

\begin{proposition}\label{prop:ergodic}
Assume $f(t)$ is a stationary mean zero Gaussian process satisfying
\eqref{eq:fautocorr}. The expectation of the empirical autocorrelations
\eqref{eq:empautocorr} is independent of the acquisition time $T$:
\[
\langle \C(\vec{\vx}_r,\vec{\vx}_s,\tau) \rangle =
\CE(\vec{\vx}_r,\vec{\vx}_s,\tau),
\]
where
\begin{equation}\label{eq:expautocorr}
\begin{aligned}
\CE(\vec{\vx}_r,\vec{\vx}_s,\tau) = \frac{1}{2\pi}\int d\omega e^{-\ii\omega \tau}\wh{F}(\omega)
\ve_r^\tr\left[\ov{\vg}(\vec{\vx}_s,\omega)\odot\vg(\vec{\vx}_s,\omega)\right],
\end{aligned}
\end{equation}
with $\vg\equiv \vg_0+\vp$.  Furthermore, \eqref{eq:empautocorr} is
ergodic, i.e.\@
\begin{equation}\label{eq:ergodic}
\C(\vec{\vx}_r,\vec{\vx}_s,\tau) \xrightarrow{T\to\infty}
\CE(\vec{\vx}_r,\vec{\vx}_s,\tau).
\end{equation}
\end{proposition}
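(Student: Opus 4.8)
The plan is to prove the two assertions separately: the statement about $\langle\C\rangle$ follows from an exact computation that turns out to be independent of the acquisition time, while ergodicity requires a second-moment (variance) estimate exploiting the Gaussianity of the source.

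First I would pass everything to the frequency domain. Writing $u(\vec{\vx}_r,\vec{\vx}_s,t)$ through its Fourier representation and using stationarity \eqref{eq:fautocorr} together with the convention \eqref{eq:ft}, a direct calculation gives the Wiener--Khinchin cross-spectral relation
\[
\langle \ov{\wh f}(\omega')\wh f(\omega)\rangle = 2\pi\,\wh F(\omega)\,\delta(\omega-\omega').
\]
Substituting this into the double frequency integral for $\langle \ov u(\vec{\vx}_r,\vec{\vx}_s,t)\,u(\vec{\vx}_r,\vec{\vx}_s,t+\tau)\rangle$ collapses one integration through the delta, and the factors $e^{\ii\omega t}$ and $e^{-\ii\omega(t+\tau)}$ combine into $e^{-\ii\omega\tau}$. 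The $t$-dependence therefore cancels and, recognizing $|\wh G(\vec{\vx}_r,\vec{\vx}_s,\omega)|^2 = \ve_r^\tr[\ov\vg\odot\vg]$, one obtains exactly the integrand of \eqref{eq:expautocorr}. Since the expectation of the integrand in \eqref{eq:empautocorr} is $t$-independent, averaging over $t\in[-T,T]$ and dividing by $2T$ returns $\CE$ unchanged, which proves independence from $T$.

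For ergodicity I would show $\var(\C)\to0$ (mean-square convergence). The key structural input is that $u$ is a linear functional of the Gaussian process $f$, hence jointly Gaussian with $\ov u$, so the fourth moment occurring in $\langle|\C|^2\rangle$ factors by Isserlis/Wick's theorem into products of the second moments computed above. One pairing reproduces $|\CE(\tau)|^2$ and exactly cancels $|\langle\C\rangle|^2$; the remaining ``connected'' pairing yields
\[
\var(\C) = \frac{1}{(2T)^2}\int_{-T}^{T}\!dt_1\int_{-T}^{T}\!dt_2\,\big|\CE(\vec{\vx}_r,\vec{\vx}_s,t_2-t_1)\big|^2.
\]
Changing variables to $s=t_2-t_1$ and using $\int_{-T}^{T}\!\int_{-T}^{T} h(t_2-t_1)\,dt_1\,dt_2=\int_{-2T}^{2T}(2T-|s|)\,h(s)\,ds$ bounds the right-hand side by $(2T)^{-1}\int_\real|\CE(\vec{\vx}_r,\vec{\vx}_s,s)|^2\,ds$, which decays like $\bigO(1/T)$ provided $\CE(\vec{\vx}_r,\vec{\vx}_s,\cdot)\in L^2(\real)$; by Parseval this is merely an integrability condition on $\wh F(\omega)\,\ve_r^\tr[\ov\vg\odot\vg]$, satisfied for band-limited sources.

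The hard part will be making the variance estimate rigorous: one must justify the Gaussian factorization (so that $u$ genuinely inherits joint Gaussianity and the relevant fourth moments are finite) and confirm the $L^2$-integrability of $\CE$ underpinning the decay. A subtlety to settle is whether $f$ is treated as real or complex, since a real process contributes additional ``anomalous'' pairings $\langle\wh f(\omega)\wh f(\omega')\rangle\propto\delta(\omega+\omega')$; these introduce oscillatory, $t$-dependent factors whose time-average is negligible as $T\to\infty$, so they do not change the leading $\bigO(1/T)$ rate but should be handled explicitly. As this parallels the stationarity and ergodicity analysis of \citet{Garnier:2009:PSI}, I would adapt their estimates on the temporal decay of correlations to close the argument.
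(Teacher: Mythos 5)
Your argument is correct in outline, but note that the paper itself does not prove this proposition: its proof is a one-line appeal to Proposition 4.1 of \citet{Garnier:2009:PSI}, which is essentially the statement being claimed. What you have written is a self-contained reconstruction of the argument behind that cited result, and it proceeds along the same lines as the reference: the relation $\langle\ov{\wh f}(\omega')\wh f(\omega)\rangle=2\pi\wh F(\omega)\delta(\omega-\omega')$ collapses the double frequency integral, shows that $\langle\ov u(t)u(t+\tau)\rangle$ is independent of $t$ (hence the time average over $[-T,T]$ leaves it unchanged), and identifies it with \eqref{eq:expautocorr}; Gaussianity then lets you Wick-factor the fourth moment so that the disconnected pairing cancels $|\langle\C\rangle|^2$ and the connected one yields $\var(\C)\le (2T)^{-1}\int_\real|\CE(\vec{\vx}_r,\vec{\vx}_s,s)|^2\,ds=\bigO(1/T)$. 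Your version makes explicit several things the citation leaves implicit: the mode of convergence in \eqref{eq:ergodic} (mean square, hence in probability), the quantitative $\bigO(1/T)$ rate, and the hypothesis actually needed for ergodicity, namely $\CE(\vec{\vx}_r,\vec{\vx}_s,\cdot)\in L^2(\real)$, i.e.\@ square integrability of $\wh F(\omega)\,\ve_r^\tr[\ov{\vg}\odot\vg]$, which holds for the band-limited spectra used in the paper's numerics. One small correction to your final remark: for real $f$ the anomalous pairings are not oscillatory-in-$t$ terms that must be averaged away; by stationarity they are again functions of $t_2-t_1$ (shifted by $\pm\tau$), so the same change of variables together with Cauchy--Schwarz bounds them by $(2T)^{-1}\|\CE\|_{L^2}^2$, contributing at the same $\bigO(1/T)$ order (and for circularly symmetric complex $f$ they vanish identically); either way your conclusion stands.
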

\begin{proof}
The proof is a straight-forward application of \cite[Proposition
4.1]{Garnier:2009:PSI}.  
\end{proof}

The ergodicity \eqref{eq:ergodic} of this proposition guarantees that for
sufficiently large acquisition time $T$ , the autocorrelation
$\C(\vec{\vx}_r,\vec{\vx}_s,\tau)$ is close to an intensity measurement,
i.e.
\[
\wh{\C}(\vec{\vx}_r,\vec{\vx}_s,\omega) \xrightarrow{T\to\infty}
\wh{\CE}(\vec{\vx}_r,\vec{\vx}_s,\omega)=\wh{F}(\omega)\ve_r^\tr\left[
\ov{(\vg_0+\vp)} \odot (\vg_0+\vp) \right].
\]
Proceeding analogously as in \S\ref{sec:ionly}, we neglect the quadratic
term in $\vp$:
\[
\wh{\CE}(\vec{\vx}_r,\vec{\vx}_s,\omega)
\approx\wh{F}(\omega)\ve_r^\tr\Re\left(\ov{\vg}_0 \odot (\vg_0+2\vp)\right).
\]
The collection of autocorrelations for $r=1,\ldots,N$ can be expressed,
approximately, as
\[
\left[\wh{\CE}(\vec{\vx}_r,\vec{\vx}_s,\omega)\right]_{r=1,\ldots,N}\\
\approx \vd(\vec{\vx}_s,\omega) \equiv
\wh{F}(\omega)\mM(\vec{\vx}_s,\omega)\begin{bmatrix}\Re(\vg_0+2\vp)\\\Im(\vg_0+2\vp)\end{bmatrix},
\]
where $\mM(\vec{\vx}_s,\omega)\in\real^{N\times 2N}$ is given by
\eqref{eq:measmx}. Therefore, the techniques developed in
\S\ref{sec:phaseret} can be applied to image from the autocorrelation
measurements \eqref{eq:empautocorr}.

\section{Numerical experiments}\label{sec:numerics} 
We now provide 2D numerical experiments of our proposed imaging method.
The physical scalings we use correspond to an optic regime. We use the
background wave velocity of $c_0 = 3\times 10^8$ m/s and central
frequency of about $590$ THz which gives a central wavelength
$\lambda_0$ of about $509$ nm. Our receiver array $\sA$ is a linear
array centered at the origin and consists of $501$ receivers located at
coordinates $\vec{\vx}_r=(0,-5+(r-1)(10/500))$mm for
$r=1,\ldots,501$. This corresponds to using a $1$ cm linear array of
receivers spaced approximately $20 \mu$m apart. We place the wave source
at coordinate $\vec{\vx}_s = (5,-7.5)$mm to guarantee
assumption~\ref{assump:geoimg} is satisfied. We begin with experiments
in the deterministic setting (\S\ref{sec:numdet}) followed by an
experiment in the stochastic setting (\S\ref{sec:numstoc}).  Lastly, we
investigate situations where assumptions~\ref{assump:smallness} and/or
\ref{assump:geoimg} are violated and our method is not expected to work
(\S\ref{sec:breakdown}). For all experiments, we assume 3D wave
propagation for simplicity so that $\wh{G}_0$ is given by
\eqref{eq:greenf} for $d=3$.

\subsection{Deterministic illuminations}\label{sec:numdet} 
Using the illumination $\wh{f}(\omega)\equiv 1$, we generate the
intensity data $\vd(\vec{\vx}_s,\omega)$ using the Born approximation:
\[
\vd(\vec{\vx}_s,\omega) = \ov{(\vg_0+\vp)}\odot(\vg_0+\vp),
\]
with $\vg_0$ and $\vp$ defined by \eqref{eq:incvec} and
\eqref{eq:refvec} respectively, for $100$ uniformly spaced frequencies
in the frequency band $[430,750]$ THz. This corresponds to obtaining
intensity data for $100$ different monochromatic illuminations with
wavelengths $\lambda\in[400,700]$ nm, equally spaced in the frequency
band. Note that the quadratic term $\ov{\vp}\odot\vp$ is present in our data,
but using assumption~\ref{assump:smallness} we proceed assuming $\vd$ is
well approximated by the linear system \eqref{eq:linsys}. 

We recover the approximate array response vector
$\wt{\vp}=(\ov{\vg}_0)^{-1}\odot\vd-\vg_0$
for each frequency $\omega$ in the angular frequency band $\sB$, where
$(2\pi)^{-1}\sB = [430,750]$ THz. An image is then formed using the
Kirchhoff migration functional integrated over $\sB$: 
\[
\KM[\wt{\vp}](\vec{\vy}) = \int_{\sB} d\omega
\KM[\wt{\vp},\omega](\vec{\vy}),
\]
where $\KM$ is defined in \eqref{eq:kirchhoff}. Here we consider image
points $\vec{\vy}\in\sW =
\{(50\mbox{mm}+i\lambda_0/2.5,j\lambda_0/2.5),\text{ for
}i,j=-25,\ldots,25\}.$ 

For our first experiment, we consider a point reflector located at
coordinate $\vec{\vy}=(50,0)$mm with refractive index perturbation
$\rho(\vec{\vy})=1\times 10^{-15}$ (roughly equivalent to a reflector of area $(\lambda_0)^2$ and reflectivity $5978$). The migrated images of the true
array response vector $\vp$ and the recovered array response vector
$\wt{\vp}$ are shown in figure~\ref{fig:point}a. Although we are
significantly undersampling both in frequency and on the array (recall
the spacing between receivers is approx. $20 \mu$m $\gg\lambda_0/2$),
the images still exhibit the cross-range (Rayleigh) resolution estimate
$\lambda_0 L/a\approx5\lambda_0$ and range resolution estimate
$c_0/|\sB|\approx 1\lambda_0$. Our second experiment
(figure~\ref{fig:point}b) uses two point reflectors located at
coordinates $\vec{\vy}_1=(50\mbox{mm}-3\lambda_0,-\lambda_0)$ and
$\vec{\vy}_2=(50\mbox{mm}+6\lambda_0,5\lambda_0)$ each with
$\rho(\vec{\vy}_i)=1\times 10^{-15}$. We show an extended scatterer (a
disk) in figure~\ref{fig:extend}. The disk is generated as a set of
point reflectors, each with $\rho(\vec{\vy}_i)=1\times 10^{-15}$
separated by $\lambda_0/4$.

\begin{figure}[!hbtp]
\centering
\begin{tabular}{p{0.5\textwidth}p{0.5\textwidth}}
\centering $\KM[\vp](\vec{\vy})$ &
\centering $\KM[\wt{\vp}](\vec{\vy})$
\end{tabular}
\includegraphics[width=\textwidth]{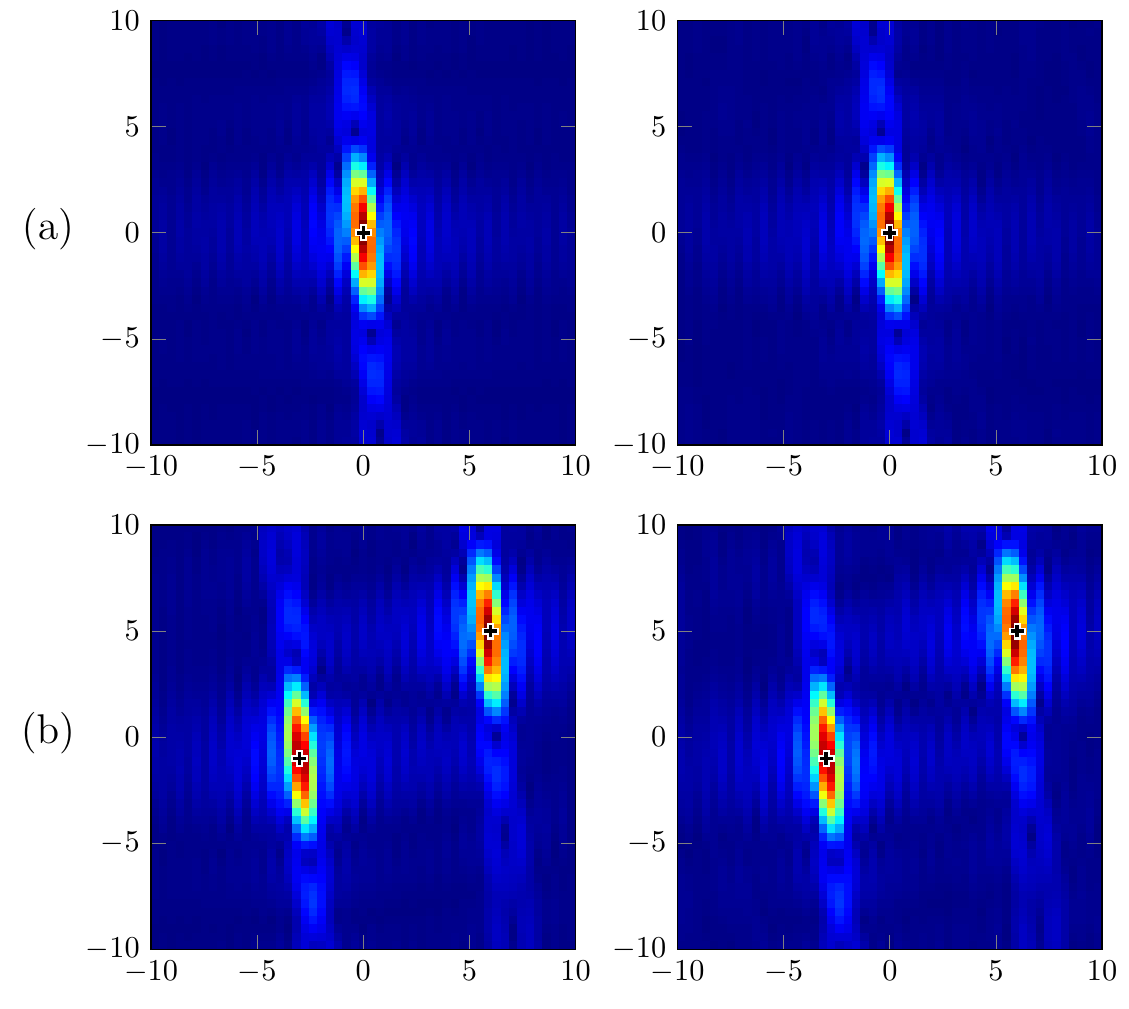}\\
\caption{Kirchhoff images of (a) one and (b) two point scatterers whose
true locations are indicated by crosses. The left column uses the full
array response vector $\vp$ while the right column uses the array
response vector $\wt{\vp}$ recovered from intensity data. The horizontal
and vertical axes display the range and cross-range respectively,
measured in central wavelengths $\lambda_0$ from
$(50,0)$mm.}\label{fig:point}
\end{figure}

\begin{figure}[!hbtp]
\centering
\begin{tabular}{p{0.5\textwidth}p{0.5\textwidth}}
\centering $\KM[\vp](\vec{\vy})$ &
\centering $\KM[\wt{\vp}](\vec{\vy})$
\end{tabular}
\includegraphics[width=\textwidth]{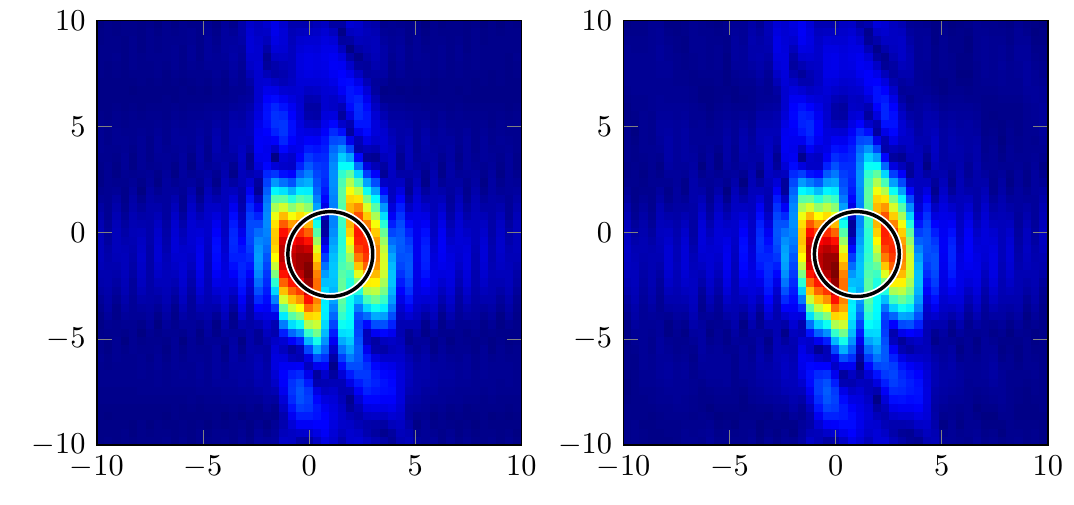} 
\caption{Kirchhoff images of an extended scatterer (disk). The boundary
of the disk is indicated by the black and white circle. The left image
uses the true array response vector $\vp$ while the right image uses the
array response vector $\wt{\vp}$ recovered from intensity measurements.
The horizontal and vertical axes display the range and cross-range
respectively, measured in central wavelengths $\lambda_0$ from
$(50,0)$mm.}\label{fig:extend}
\end{figure}

\subsection{Stochastic illumination}\label{sec:numstoc} 
Here we image with power spectrum data $\vd$ generated from a stochastic
illumination as in \S\ref{sec:stocillum}. Since we work in an optic
regime, adequately sampling signals in the time domain and performing
the autocorrelations \eqref{eq:empautocorr} is an expensive calculation.
We instead use the Wiener-Khinchin theorem \cite{Ishimaru:1997:WPS} to
simulate power spectrum measurements directly.

We assume the wave source at $\vec{\vx}_s$ is driven by a stationary
mean zero Gaussian process $f(t)$ with correlation function $\langle
\ov{f}(t)f(t+\tau)\rangle = F(\tau)$. By the Wiener-Khinchin theorem,
$\wh{f}(\omega)$ is a mean zero Gaussian process with correlation
function
\begin{equation}\label{eq:freqcorrfunc}
\langle \ov{\wh{f}}(\omega)\wh{f}(\omega')\rangle =
2\pi\delta(\omega-\omega')\wh{F}(\omega).
\end{equation}
Thus frequency samples of $\wh{f}(\omega)$ are \emph{independent} normal
random variables with variance proportional to $\wh{F}(\omega)$. Here we
use
\begin{equation}\label{eq:psd}
\wh{F}(\omega) =
t_c\exp\left(\frac{-(\omega-\omega_0)^2}{4\pi/t_c^2}\right),
\end{equation}
where $(2\pi)^{-1}\omega_0 = 590$ THz is the central frequency and $t_c
= 150\times 10^{-12}$ sec is the correlation time of $f(t)$ (i.e.
$F(\tau)\approx 0$ for
$\tau\gg t_c$). This choice of $t_c$ gives the signal an effective
frequency band of $(2\pi)^{-1}\sB = [430,750]$ THz (i.e.
$\wh{F}(\omega)\approx 0$ for $\omega\notin\sB$). Using
\eqref{eq:freqcorrfunc} and \eqref{eq:psd} we generate frequency
samples $\wh{f}(\omega_i)$ for $100$ frequencies $\omega_i$ equally
spaced in $\sB$.

For a large enough acqusition time $T$, the empirical autocorrelations
\eqref{eq:empautocorr} give frequency domain measurements proportional
to
\[
\wh{\C}(\vec{\vx}_r,\omega) =
\big|\ve_r^\tr(\vg_0+\vp)\wh{f}(\omega)\big|^2\quad\text{for
$r=1,\ldots,N$.}
\]
Thus for each frequency $\omega_i\in\sB$ we generate the power spectrum data 
\begin{equation}\label{eq:cleandata}
\vd(\vec{\vx}_s,\omega_i) =
\ov{\Big((\vg_0+\vp)\wh{f}(\omega_i)\Big)}\odot\Big((\vg_0+\vp)\wh{f}(\omega_i)\Big).
\end{equation}
Because correlations are robust with respect to additive noise, we also
consider autocorrelations with additive noise: 
\[
\wh{\C}(\vec{\vx}_r,\omega) =
\big|\ve_r^\tr(\vg_0+\vp)\wh{f}(\omega)+\wh{\eta}_r(\omega)\big|^2\quad\text{for
$r=1,\ldots,N$,}
\]
where the noise $\wh{\eta}_r(\omega)$ is an \emph{independent} mean zero
Gaussian process with correlation function given by
\eqref{eq:freqcorrfunc} and \eqref{eq:psd} for each $r=1,\ldots,N$. Here
we set the noise power equal to $10\%$ of the signal power at each
receiver, i.e.
\[
\int d\omega |\wh{\eta}_r(\omega)|^2 = \frac{1}{10}\int d\omega
|\ve_r^\tr(\vg_0+\vp)\wh{f}(\omega)|^2,\quad\text{for $r=1,\ldots,N$.}
\]
Noisy data for each frequency $\omega_i\in\sB$ is then generated as 
\begin{equation}\label{eq:noisydata}
\vd(\vec{\vx}_s,\omega_i) =
\ov{\Big((\vg_0+\vp)\wh{f}(\omega_i)+\wh{\veta}(\omega_i)\Big)}\odot\Big((\vg_0+\vp)\wh{f}(\omega_i)+\wh{\veta}(\omega_i)\Big),
\end{equation}
where $\wh{\veta}(\omega) = \begin{bmatrix}
\wh{\eta}_1(\omega),\cdots,\wh{\eta}_N(\omega)\end{bmatrix}^\tr.$ We can
indeed consider noise with much larger power (e.g. noise power equal to
$100\%$ signal power), however to compensate we then need additional
frequency samples to maintain sufficient averaging in migration images.
In figure~\ref{fig:stocillum} we show the migrated images $\KM[\wt{\vp}]$
for $\wt{\vp}=(2\pi\wh{F}\ov{\vg}_0)^{-1}\odot\vd-\vg_0$ recovered from clean
data \eqref{eq:cleandata} and from noisy data \eqref{eq:noisydata}.

\begin{figure}[!hbtp]
\centering
\begin{tabular}{p{0.32\textwidth}p{0.32\textwidth}p{0.32\textwidth}}
\centering $\KM[\vp](\vec{\vy})$ &
\centering $\KM[\wt{\vp}](\vec{\vy})$ &
\centering $\KM[\wt{\vp}](\vec{\vy})$ 
\end{tabular}
\includegraphics[width=\textwidth]{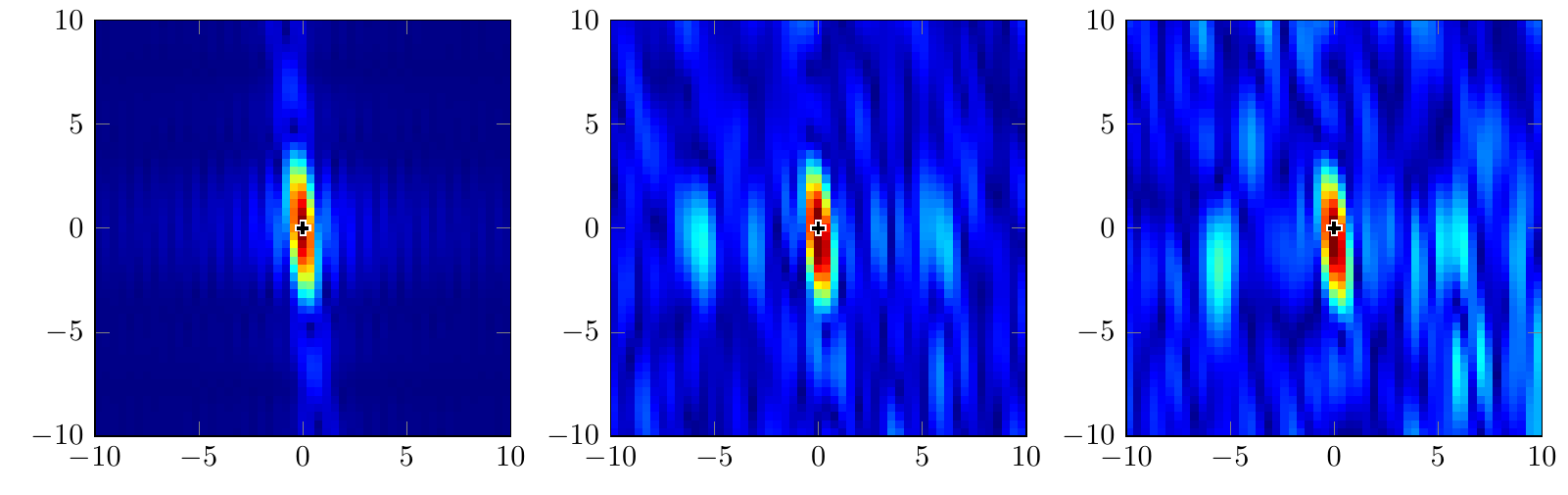} 
\caption{Kirchhoff images of a point scatterer using a stochastic
illumination and autocorrelation measurements. The images are generated
using (left) the true array response vector $\vp$, (center) $\wt{\vp}$
recovered from clean power spectrum data \eqref{eq:cleandata} and (right)
$\wt{\vp}$ recovered from noisy power spectrum data \eqref{eq:noisydata}.
The horizontal and vertical axes display the range and cross-range
respectively, measured in central wavelengths $\lambda_0$ from
$(50,0)$mm.}\label{fig:stocillum} 
\end{figure}

\subsection{Breakdown of the method}\label{sec:breakdown}
We now investigate situations where
assumptions~\ref{assump:smallness}~and/or~\ref{assump:geoimg} are
violated. For these experiments, we fix the receiver array $\sA$ (again
consisting of $501$ receivers with locations $\vec{\vx}_r$ given above)
while varying the source position $\vec{\vx}_s$, the reflector location
$\vec{\vy}$ and the reflectivity $\rho(\vec{\vy})$. In
figure~\ref{fig:breakdown} we show the migrated images of the recovered
array response vector $\KM[\wt{\vp}]$ for the following situations:\\
\begin{tabular}{l|l|c|c|c}
& \parbox{6em}{Assumptions\\violated\\[-0.9em]} & $\vec{\vy}$ & $\rho(\vec{\vy})$ &
$\vec{\vx}_s$\\\hline
(a) Source near scatterer & \ref{assump:smallness} and
\ref{assump:geoimg} & $(50\mbox{mm},0)$ & $10 ^{-15}$ & 
$(50\mbox{mm}-10\lambda_0,0)$\\
(b) Receivers near scatterer & \ref{assump:smallness} &
$(11\lambda_0,0)$ & $10^{-15}$ & $ (-50\mbox{mm},0)$\\
(c) Large reflectivity & \ref{assump:smallness} & $(50\mbox{mm},0)$ &
$10^{-10}$ & $(5,-75)$mm\\
(d) No geometric imaging condition & \ref{assump:geoimg} & $(50\mbox{mm},0)$ &
$10^{-15}$ & $(5\mbox{mm},0)$
\end{tabular}\\

From figures~\ref{fig:breakdown}a,~\ref{fig:breakdown}b and
\ref{fig:breakdown}c we see the imaging method is most sensitive to breaking assumption~\ref{assump:smallness}. In these situations the
quadratic term $\ov{\vp}\odot\vp$ cannot be neglected in the intensity data
\eqref{eq:intensity} and thus the linear system we consider in
\eqref{eq:linsys} is no longer a good approximation. This leads to artifacts in the images.
Figure~\ref{fig:breakdown}d demonstrates the imaging method is more
robust than expected with respect to assumption~\ref{assump:geoimg} and
the position of $\vec{\vx}_s$.

\begin{figure}[!hbtp]
\centering
\includegraphics[width=\textwidth]{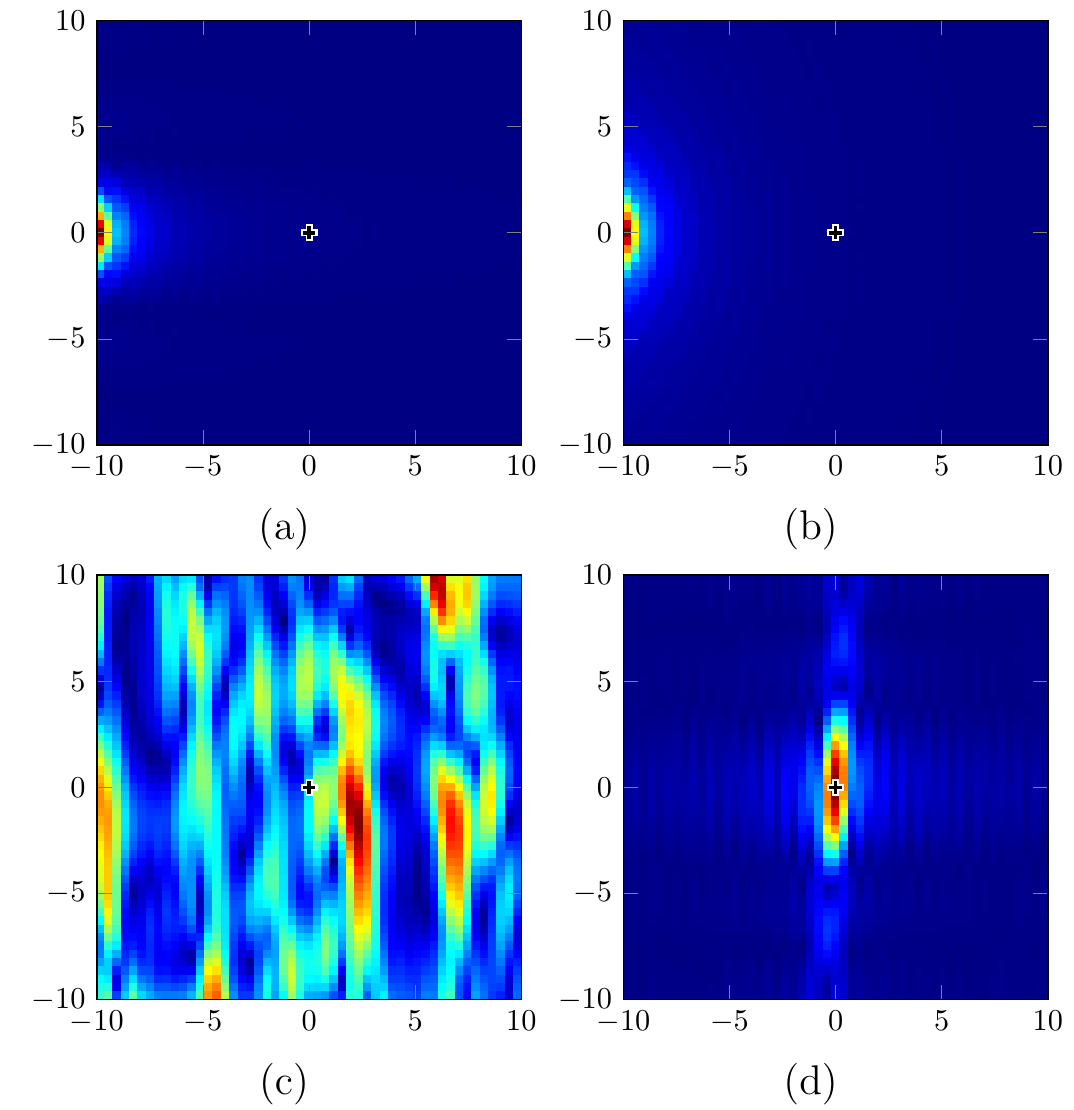}
\caption{Breakdown of imaging method: migrated images $\KM[\wt{\vp}]$
for setups violating assumptions \ref{assump:smallness} and/or
\ref{assump:geoimg}. Details of each setup are listed above and
correspond to (a) $\vec{\vx}_s$ placed to close to $\vec{\vy}$, (b)
$\sA$ placed too close to $\vec{\vy}$, (c) large reflectivity $\rho$ and
(d) $\vec{\vx}_s$ placed in front of the array $\sA$. The axes are
measured in central wavelengths $\lambda_0$ from the scatterer's true
location $\vec{\vy}$ which is indicated by a
cross.}\label{fig:breakdown}
\end{figure}

\section{Discussion and Future Work}\label{sec:discussion}

We have shown that when the scattered field is small compared to the
incident field (assumption~\ref{assump:smallness}), one can consider the
problem of recovering full-waveform data from intensity measurements as
a linear least-squares problem. For $N$ receivers, the corresponding
real matrix is $N \times 2N$ so all we can expect to recover is the
projection of the real and imaginary parts of the full-waveform data onto an $N$ dimensional subspace.
This turns out to be sufficient to image with Kirchhoff migration
(theorem~\ref{thm:km}). Crucially we do not need to manipulate the
fields at the receiver end, e.g. to introduce phases. The least-squares
problems we obtain are usually well-conditioned and the computational
cost of solving them ($\mathcal{O}(N)$ complex operations, each) is
negligible compared to the cost of Kirchhoff migration. Since we make no
assumptions on the source phases, our method adapts well to situations
where the source is driven by a Gaussian process and the measurements
are autocorrelations at the receiver locations.

The fundamental principle we have used here is that the imaging method
(in this case Kirchhoff migration) does not require all the data (in
this case the full-waveform scattered field) to form an image. For
Kirchhoff migration this is exploited e.g. by undersampling in frequency
and/or using only a few sources or receivers to image. We have shown
that there is another way in which one can use incomplete data, as
projections of the array response vector on certain subspaces leave the
Kirchhoff images unaffected. A similar principle is what is exploited by 
Novikov et al. \cite{Novikov:2014:ISI} to image with intensities, since
they show that knowing inner products of single source experiments is
enough to image with MUSIC. It would be interesting to carry this idea
further and see whether the same preprocessing we use here works for
MUSIC and also whether it is possible to image scatterers with even less
data. 

\section*{Acknowledgements}
The work of P. Bardsley and F. Guevara Vasquez was partially supported
by the National Science Foundation grant DMS-1411577.

\bibliographystyle{abbrvnat}
\bibliography{biblspr}

\end{document}